\documentclass[12pt,a4paper,reqno]{amsart}
\allowdisplaybreaks
\usepackage{amsmath}
\usepackage{amsfonts}
\usepackage{amssymb,amsthm,amsfonts,amsthm,latexsym,enumerate,url,cases}
\numberwithin{equation}{section}
\usepackage{mathrsfs}
\usepackage{hyperref}
\hypersetup{colorlinks=true,citecolor=blue,linkcolor=blue,urlcolor=blue}
\usepackage{extarrows}
\theoremstyle{plain}
\newtheorem{theorem}{Theorem}

\newtheorem{lemma}{Lemma}
\newtheorem{problem}{Problem}

\newtheorem{proposition}{Proposition}

\theoremstyle{definition}

\usepackage{etoolbox}
\makeatletter
\patchcmd{\@settitle}{\uppercasenonmath\@title}{}{}{}
\patchcmd{\@setauthors}{\MakeUppercase}{}{}{}
\patchcmd{\section}{\scshape}{}{}{}
\makeatother

\usepackage[numbers,sort&compress]{natbib}

\begin{document}

\title
[Note on unique representation bases]
{Note on unique representation bases}

\author
[Y. Ding and J. Wang] 
{Yuchen Ding$^\dag$ and Jie Wang}

\address{(Yuchen Ding$^{1,2}$) $^1$School of Mathematics,  Yangzhou University, Yangzhou 225002, People's Republic of China}
\address{$^2$HUN-REN Alfr\'ed R\'enyi Institute of Mathematics, Budapest, Pf. 127, H-1364 Hungary}
\email{ycding@yzu.edu.cn}

\address{(Jie Wang) School of Mathematical Sciences,  Yangzhou University, Yangzhou 225002, People's Republic of China}
\email{1660777818@qq.com}

\thanks{$^\dag$ Corresponding author}

\keywords{unique representation bases, Sidon sets, growth of sets, inductive process}
\subjclass[2010]{11B13, 11B34, 11B05}

\begin{abstract}
Answering affirmatively a 2007 problem of Chen, the first author proved that there is a unique representation basis $A$ of $\mathbb{Z}$ and a constant $c>0$ such that
$$
A(-x,x)\ge c\sqrt{x}
$$ 
for infinitely many positive integers $x$, where $A(-x,x)=\big|A\cap[-x, x]\big|$. Let $c_{\mathscr{A}}$ be the least upper bound for such $c$. It was proved in the former article of the first author that $\sqrt{2}/2\le c_{\mathscr{A}}\le \sqrt{2}$. In this note, the prior result is improved to $c_{\mathscr{A}}\ge 1$.

\end{abstract}
\maketitle

\section{Introduction}
Let $\mathbb{N}$ be the set of natural numbers and $A$ a subset of $\mathbb{N}$. 
The Erd\H os-Tur\'an conjecture on additive bases \cite{erdos-turan}, posed in 1941, states that if $R_A(n)\ge 1$ for all sufficiently large $n\in \mathbb{N}$, then $R_A(n)$ cannot be unbounded, where
$$
R_A(n)=\#\big\{(a,a'):n=a+a',a,a'\in A\big\}.
$$
Grekos, Haddad, Helou, and Pihko \cite{GHHP} showed that $R_A(n)\ge 6$ for infinitely many $n$, and Borwein, Choi and Chu \cite{BCC} then improved this to $\limsup_{n\rightarrow\infty}R_A(n)\ge 8$. S\'andor \cite{Sandor} proved that if $\limsup_{n\rightarrow\infty}R_A(n)=K<\infty$, then $\liminf_{n\rightarrow\infty}R_A(n)\le K-2\sqrt{K}+1$.

It turns out that the Erd\H os-Tur\'an conjecture on additive bases does not hold if the natural set $\mathbb{N}$ was replaced by the integer set $\mathbb{Z}$. According to Nathanson \cite{Nathanson}, a subset $A$ of $\mathbb{Z}$ is called a {\it unique representation basis} for the integers if $r_A(n)=1$ for all $n\in \mathbb{Z}$, where
$$
r_A(n)=\#\big\{(a,a'):n=a+a',a\le a',a,a'\in A\big\}.
$$
In \cite{Nathanson}, Nathanson showed that there is a unique representation basis of $\mathbb{Z}$, and furthermore it can be constructed to be arbitrarily sparse. Nathanson also proved that there is a unique representation basis $A$ for the integers such that
$$
A(-x,x):=\big|A\cap [-x,x]\big|\ge (2/\log 5)\log x+0.63.
$$
Nathanson then asked the following interesting problem.

\begin{problem}[Nathanson]\label{problem1}
Does there exists a number $\theta<1/2$ so that $A(-x,x)\le x^\theta$ for every unique representation basis $A$ and for all sufficiently large $x$? 
\end{problem}

Later, Problem \ref{problem1} was answered negatively by Chen \cite{chen} who proved that for any $\varepsilon>0$ there is a unique representation basis $A$ for the integers such that $A(-x,x)\ge x^{1/2-\varepsilon}$ for infinitely many positive integers $x$.
This leads him to the following two problems. 

\begin{problem}[Chen]\label{problem2}
Does there exist a real number $c>0$ and a unique representation basis $A$ such that
$A(-x,x)\ge c\sqrt{x}$ for infinitely many positive integers $x$?
\end{problem}

\begin{problem}[Chen]\label{problem3}
Does there exist a real number $c>0$ and a unique representation basis $A$ such that
$A(-x,x)\ge c\sqrt{x}$ for all real numbers $x\ge 1$?
\end{problem}

In a recent article, the first author \cite{Ding} showed that the answer to Problem \ref{problem2} is affirmative while the answer to Problem \ref{problem3} is negative. For Problem \ref{problem2}, the first author proved that there is a unique representation basis $A$ such that 
\begin{align}\label{eq-0204-1}
A(-x,x)\ge \big(\sqrt{2}/2-\varepsilon\big)\sqrt{x}
\end{align}
for infinitely many positive integers $x$, where $\varepsilon>0$ is arbitrarily small. 

As in \cite{Ding}, let $\mathscr{A}$ be set of all unique representation bases $A$ such that there exists a real number $c>0$ so that $A(-x,x)>c x^{1/2}$ for infinitely many positive integers $x$. For any $A\in \mathscr{A}$, let 
\begin{align}\label{eq-0204-2}
c_A:=\limsup_{x\rightarrow\infty}\frac{A(-x,x)}{\sqrt{x}} \quad \text{and} \quad c_{\mathscr{A}}:=\sup_{A\in \mathscr{A}}c_A.
\end{align}
By (\ref{eq-0204-1}) the set $\mathscr{A}$ is nonempty, and furthermore
$
c_{\mathscr{A}}\ge \sqrt{2}/2.
$ 
For any unique representation basis $A$, it is clear that the elements of 
$$
\big(A\cap[-x, x]\big)+(x+1):=\{a+x+1: a\in A, -x\le a\le x\}
$$
form a Sidon set in $[1, 2x+1]$. Hence, we have $A(-x,x)\le \big(\sqrt{2}+o(1)\big)\sqrt{x}$ (see Lemma \ref{lem1} below). Therefore, so far we got
\begin{align*}
\sqrt{2}/2\le c_{\mathscr{A}}\le \sqrt{2}.
\end{align*}
Naturally, one would like to ask `{\it what is the exact value of $c_{\mathscr{A}}$}'? 

In this note, the lower bound $\sqrt{2}/2$ of $c_{\mathscr{A}}$ is improved.

\begin{theorem}\label{thm:1}
Let $c_{\mathscr{A}}$ be defined in $(\ref{eq-0204-2})$. Then we have $c_{\mathscr{A}}\ge 1$.
\end{theorem}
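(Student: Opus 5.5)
Our plan is to build a unique representation basis by the standard inductive process, interleaving two kinds of steps. In a \emph{Sidon step} we insert a large dense Sidon set, shifted and scaled so that it contributes about $\sqrt{x}$ elements to $A(-x,x)$ for a suitable huge $x$. In a \emph{completion step} we add pairs of elements to represent the least integer not yet represented, exactly as in Nathanson's construction and in \cite{Ding}.

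The gain over \cite{Ding} will come from where the dense set is placed. A Sidon set $S\subset[1,N]$ with $|S|\sim\sqrt N$ exists (Singer/Erd\H{o}s–Tur\'an). We place it inside $[-x,x]$ with $N$ close to $x$, so that the window has length about $x$ and not $x/2$. The translate $S-\lfloor N/2\rfloor$ has sums in $[-N,N]$, and this is what forces the factor $\sqrt2/2$. Instead we use a \emph{difference-type} arrangement. Take $B=S\cup(-S)$ translated suitably, or more precisely a set $B\subset[-x,x]$ of size $(1+o(1))\sqrt{x}$ such that all sums $b+b'$ with $b\le b'$ are distinct and avoid the finitely many sums already represented by the previous finite part $A_k$.

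A natural candidate is $B=\{s-u: s\in S\}\cup\{-(s-u)': \ldots\}$. The cleaner route is to choose $B=\{2\cdot(\text{element of }S)\}$-type sets whose sums have controlled parity. Concretely, we want a set $B\subset[-x,x]$ with $|B|\ge(1-\varepsilon)\sqrt{x}$ such that $B$ is a Sidon set. Heuristically that is about half the maximal size $\sqrt{2x}$, so it should be attainable with slack. For instance, take a Bose–Chowla or Singer set modulo $m\approx x$, and use residues that lie in an interval of length $x$; a random shift of the cyclic structure keeps roughly the proportional number $\approx\sqrt{m}\cdot(x/m)$ of elements. Optimizing this via modular Sidon sets of size $\sim\sqrt{m}$ in $\mathbb{Z}_m$ will be the concrete choice.

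The key steps are as follows.
\begin{enumerate}
\item Given a finite Sidon set $A_k$ with $r_{A_k}(n)\le1$, pick $x$ enormous compared with $\max|A_k|$.
\item Select $B\subset[-x,x]$ as above. Remove the $O(|A_k|\sqrt{x})$-conflicting elements: those $b$ with $b+a$ or $2b$ equal to an existing sum, or $b+b'=a+a'$. For this, first show that the number of conflicts is $o(\sqrt{x})$, using that $A_k$ is tiny and that $B$ is Sidon with a random translate. Averaging over translates shows that the conflicts $b+b'=a+a'$ are $O(|A_k|^2\cdot\text{small})$, and the conflicts $b-b''=a-a'$ are $O(|A_k|^2)$ by the Sidon property of $B$.
\item Run the completion step, which adds two elements far outside $[-x,x]$ and so does not affect the count.
\item Iterate.
\end{enumerate}
Then $A=\bigcup A_k$ has $r_A\equiv1$ and $A(-x_k,x_k)\ge(1-\varepsilon)\sqrt{x_k}$. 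Letting $\varepsilon\to0$ gives $c_{\mathscr{A}}\ge1$.

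The main obstacle is step 2, namely the existence of a Sidon set $B\subset[-x,x]$ of size $(1-o(1))\sqrt{x}$ that is compatible with the already placed elements, while we lose only $o(\sqrt x)$ elements. Taking a full-interval Sidon set of length $2x$ would give $\sqrt2$, but the conflicts with $A_k$ and with the future completions could then be of order $\sqrt{x}$. We would first try the translate-averaging argument with a Singer set in $\mathbb{Z}_m$, $m\approx 2x$, and keep those residues lying in a half-length window. This yields density $\sqrt{x}$, and the freedom of the shift is what lets us kill the conflicts.
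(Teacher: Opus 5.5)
There is a genuine gap, and it sits in the one conflict your list omits. You handle $b\in 2A_k-A_k$, $2b\in 2A_k$, $b+b'\in 2A_k$ and $b-b'\in A_k-A_k$, and these indeed cost only $O(|A_k|^3)$ deletions. You must also exclude three new elements against one old one: $a+b''=b+b'$ with $a\in A_k$ and $b,b',b''\in B$, i.e.\ small elements of $B+B-B$.

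Translation cannot make these conflicts $o(\sqrt{x})$. Replacing $B$ by $B+t$ just replaces $a$ by $a-t$, so averaging over $t$ only averages the number of representations $n=b+b'-b''$ over $n$ near $a$. For a dense Sidon set in an interval $J$ with $0$ well inside $J+J-J$, this average is $\asymp\sqrt{x}$. Dense Sidon sets are equidistributed, and for a Singer or Bose set $B-B$ covers almost every nonzero residue, so $B+B-B$ hits almost every residue about $|B|$ times. For fixed $a$, each $b$ lies in $O(1)$ such triples by the Sidon property, so you would have to delete a positive proportion of $B$. That is exactly the $O(|A_k|\sqrt{x})$ you wrote down, and nothing in your plan reduces it.

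Avoiding it with one interval $J=[\alpha,\beta]\subset[-x,x]$ means keeping $0$ outside $J+J-J=[2\alpha-\beta,2\beta-\alpha]$. That forces $J\subset(\beta/2,\beta]$ or its negative, hence $|J|<x/2$ and $|B|\le(1+o(1))\sqrt{x/2}$, which is the old constant $\sqrt2/2$. Your concrete choice in the last paragraph lands exactly there. A Singer set modulo $m\approx 2x$ has $\approx\sqrt{2x}$ elements, and a half-length window keeps $\approx\sqrt{2x}/2=\sqrt{x/2}$ of them, not $\sqrt{x}$. Separately, $S\cup(-S)$ is never Sidon, since $s+(-s)=0$ for every $s$, and no translate fixes that.

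The missing idea is to use \emph{two} windows instead of one. The paper proceeds as follows.
\begin{enumerate}
\item Take Bose's $p$ residues modulo $p^2-1$, which are Sidon modulo $p^2-1$, with representatives in $[1,p^2]$.
\item Discard those in the band $\big((\frac12-\frac{\varepsilon^2}{8})p^2,(\frac12+\frac{\varepsilon^2}{8})p^2\big)$. By $f_2(N)\sim\sqrt N$ there are at most about $\frac{\varepsilon}{2}p$ of them.
\item Replace each remaining $s$ below the band by $s-(p^2-1)$.
\end{enumerate}
The pieces $[-p^2,-(\frac12+\frac{\varepsilon^2}{8})p^2]$ and $[(\frac12+\frac{\varepsilon^2}{8})p^2,p^2]$ together cover every residue modulo $p^2-1$ outside that thin band. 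So at least $(1-\frac{3\varepsilon}{4})p$ elements survive, which is $(1-\frac{3\varepsilon}{4})\sqrt{x}$ with $x=p^2$. The lift is Sidon in $\mathbb{Z}$ because the residues are Sidon modulo $p^2-1$. Every $b+b'-b''$ has absolute value at least $\frac{\varepsilon^2}{4}p^2$, so the three-new-one-old case is impossible outright (Case 5 in the paper) rather than paid for by deletions. With this set in place of yours, the rest of your scheme goes through: $O(|A_k|^2)$ deletions for the two-new cases, and completion elements placed far away.
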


We {\it conjecture} $c_{\mathscr{A}}=\sqrt{2}$ due to some considerations from Sidon sets. 

\section{Proofs}
The proof of Theorem \ref{thm:1} makes use of the results from Sidon sets. 
An integer set 
$$
S=\{s_1<s_2<\cdot\cdot\cdot<s_t\}
$$ 
is called a {\it Sidon set} if all the sums 
$
s_i+s_j~(i\le j)
$
are different. Let $f_2(N)$ be the largest cardinality of a Sidon set in $\big\{1,2,\cdots, N\big\}$.
We need a well-known result on the asymptotic formula of $f_2(N)$ (see e.g., \cite[Theorem 7, page 88]{Sidon} or \cite{O'}).
\begin{proposition}\label{prop1}
As $N\rightarrow\infty$, we have $f_2(N)\sim \sqrt{N}$.
\end{proposition}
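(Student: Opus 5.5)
The plan is to prove the two inequalities $f_2(N)\le (1+o(1))\sqrt N$ and $f_2(N)\ge (1-o(1))\sqrt N$ separately. Throughout I use the standard fact that a set of integers is a Sidon set exactly when all differences $s_i-s_j$ with $i\neq j$ are distinct. This holds because $s_i+s_j=s_k+s_l$ is equivalent to $s_i-s_k=s_l-s_j$.

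\emph{Upper bound (Erd\H os--Tur\'an counting).} Let $S\subseteq\{1,\dots,N\}$ be a Sidon set with $|S|=k$, and fix a parameter $u$ with $1\le u\le N$, to be chosen later. For each integer $m$ with $-u< m\le N$, put $I_m=[m,m+u)$ and $a_m=|S\cap I_m|$. There are $N+u-1\le N+u$ windows, and every element of $S$ lies in exactly $u$ of them, so $\sum_m a_m=ku$.

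Now count ordered pairs $(s,s')\in S^2$ with $s<s'$ that lie in a common window. Such a pair has difference $d=s'-s\in[1,u-1]$, and it lies in exactly $u-d$ windows. By the Sidon property each $d$ is realised by at most one pair. Hence
\begin{align*}
\sum_m \binom{a_m}{2}\le \sum_{d=1}^{u-1}(u-d)=\frac{u(u-1)}{2}.
\end{align*}
By Cauchy--Schwarz, $\sum_m a_m^2\ge (ku)^2/(N+u)$. Combining the two estimates gives
\begin{align*}
\frac{k^2u^2}{N+u}-ku\le u^2,
\end{align*}
that is, $k^2\le (N+u)(1+k/u)$.

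Since trivially $k\le N$, and in fact $k\ll\sqrt N$ (take $u=N$ in the same inequality), choosing $u=\lfloor N^{3/4}\rfloor$ yields $k^2\le N+O(N^{3/4})$. Therefore $k\le \sqrt N+O(N^{1/4})$, which gives $f_2(N)\le(1+o(1))\sqrt N$.

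\emph{Lower bound (Singer's construction).} For a prime $q$, set $m=q^2+q+1$. I will use Singer's theorem: there are $q+1$ residues modulo $m$ whose pairwise differences are all distinct modulo $m$.

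The proof I have in mind is as follows. The group $G=\mathbb F_{q^3}^*/\mathbb F_q^*$ is cyclic of order $m$, since $\mathbb F_{q^3}^*$ is cyclic; fix a generator $g$. View $G$ as the set of points of the projective plane $\mathbb P(\mathbb F_{q^3})$ over $\mathbb F_q$. Let $L$ be a line, for instance $\{x:\mathrm{Tr}(x)=0\}$, and let $D=\{i\bmod m: g^i\in L\}$, so that $|D|=q+1$.

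Multiplication by $g^t$ is a collineation, so $D+t$ is again a line. For $t\not\equiv0$ this line differs from $L$, because the stabiliser of $L$ in the cyclic group must be trivial. Two distinct lines meet in exactly one point, so $|D\cap(D+t)|=1$ for every $t\not\equiv 0\pmod m$. This means every nonzero residue is a difference of elements of $D$ in exactly one way.

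Taking the representatives of $D$ in $[1,m]$ gives a set of integers. Any equality $s_i-s_j=s_k-s_l$ between integer differences would also hold modulo $m$, so integer differences are distinct and the set is a Sidon set. Hence $f_2(q^2+q+1)\ge q+1$.

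Given $N$, let $q$ be the largest prime with $q^2+q+1\le N$. Because consecutive primes satisfy $p_{n+1}/p_n\to1$ (a consequence of the prime number theorem), we get $q=(1-o(1))\sqrt N$. Since $f_2$ is monotone, $f_2(N)\ge q+1\ge (1-o(1))\sqrt N$.

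\emph{Main obstacle.} The counting argument is routine. The substantive step is Singer's difference set, in particular checking that the translates $D+t$ are distinct lines, i.e.\ that the cyclic Singer group acts freely on lines. I would handle this by noting that the group acts regularly on points, and then either apply the same regularity to lines by duality, or argue directly that a nontrivial element fixing $L$ would have to fix a point, which regularity forbids. If this detail becomes cumbersome, I would simply cite Singer's theorem, as the paper does via \cite{Sidon}.
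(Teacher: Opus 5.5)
Your proof is correct and is the standard argument behind the paper's reference; the paper gives no proof of this proposition and only cites the literature. The Erd\H{o}s--Tur\'an window count gives $f_2(N)\le\sqrt N+O(N^{1/4})$, and Singer's $(q^2+q+1,q+1,1)$ difference sets with $p_{n+1}/p_n\to1$ give the matching lower bound. The step you flagged closes quickly: the stabiliser of $L$ in the Singer group acts freely on the $q+1$ points of $L$, so its order divides both $q+1$ and $q^2+q+1=q(q+1)+1$, which are coprime, so the stabiliser is trivial.
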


Different to the earlier proof by the first author \cite{Ding}, we do not rely only on the asymptotic formula of $f_2(N)$, but also adopt a classical result of Bose \cite{Bose}, see also \cite[Theorem 3, page 81]{Sidon}.
\begin{proposition}\label{prop2}
Let $p$ be a prime. Then, there are at least $p$ elements $\{g_1,g_2,\cdots, g_p\}$ in $\big\{1,2,\cdots,p^2-1\big\}$ such that all the sums
$$
g_i+g_j \quad (1\le i\le j\le p)
$$ 
are different modulo $p^2-1$.
\end{proposition}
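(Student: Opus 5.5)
The plan is to use the classical finite-field construction of Bose, working in $\mathbb{F}_{p^2}$. Fix a primitive element $\theta$ of $\mathbb{F}_{p^2}$, i.e.\ a generator of the cyclic group $\mathbb{F}_{p^2}^{\times}$ of order $p^2-1$. Since the order of $\theta$ exceeds $p-1$, we have $\theta\notin\mathbb{F}_p$. Hence $\theta$ has degree $2$ over $\mathbb{F}_p$, and $\{1,\theta\}$ is a basis of $\mathbb{F}_{p^2}$ over $\mathbb{F}_p$.

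First I will define the set. For each $t\in\mathbb{F}_p$ the element $\theta+t$ is nonzero, because $\theta\notin\mathbb{F}_p$. So there is a unique exponent $g_t\in\{1,2,\dots,p^2-1\}$ with $\theta^{g_t}=\theta+t$; this uses that every nonzero element is a power of $\theta$ with exponent determined modulo $p^2-1$. The $p$ elements $\theta+t$ are pairwise distinct, so the exponents $g_t$ are pairwise distinct. This gives $p$ elements $g_1,\dots,g_p$ of $\{1,\dots,p^2-1\}$.

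Next I will verify the Sidon property modulo $p^2-1$. Suppose $g_{t_1}+g_{t_2}\equiv g_{t_3}+g_{t_4}\pmod{p^2-1}$. Raising $\theta$ to both sides is well defined modulo $p^2-1$, which gives $(\theta+t_1)(\theta+t_2)=(\theta+t_3)(\theta+t_4)$. Expanding both sides yields
$$\theta^2+(t_1+t_2)\theta+t_1t_2=\theta^2+(t_3+t_4)\theta+t_3t_4.$$
Cancelling $\theta^2$ and using the linear independence of $1$ and $\theta$ over $\mathbb{F}_p$, we get $t_1+t_2=t_3+t_4$ and $t_1t_2=t_3t_4$. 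Then $\{t_1,t_2\}$ and $\{t_3,t_4\}$ are both the multiset of roots of the same monic quadratic $X^2-(t_1+t_2)X+t_1t_2$ over the field $\mathbb{F}_p$. Unique factorization in $\mathbb{F}_p[X]$ therefore forces $\{t_1,t_2\}=\{t_3,t_4\}$ as multisets. So all sums $g_i+g_j$ with $i\le j$ are distinct modulo $p^2-1$.

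The only delicate point is the step where the congruence of exponents is converted into an equality of field elements, and then the coefficient comparison. Both steps rest on $\theta$ having multiplicative order exactly $p^2-1$ and degree exactly $2$ over $\mathbb{F}_p$, and both properties come from choosing $\theta$ primitive. The rest is routine.
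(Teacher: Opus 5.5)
Your proof is correct and is exactly Bose's classical construction. You take exponents $g_t$ with $\theta^{g_t}=\theta+t$ for a primitive $\theta\in\mathbb{F}_{p^2}$ and compare coefficients in the basis $\{1,\theta\}$, which is precisely the result the paper cites (rather than reproving) for this proposition.
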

Historically, theorems of the type of Proposition \ref{prop2} were initially proved by Singer \cite{Singer} which were later used to obtain the asymptotic formula of $f_2(N)$( see e.g., \cite{Chowla,Erdos-2}).
The following lemma, which is a consequence of Proposition \ref{prop1} and Proposition \ref{prop2}, plays the key role in the improvement of $c_{\mathscr{A}}$.

\begin{lemma}\label{lem1}
Let $\varepsilon>0$ be an arbitrarily small given number. Then, for any sufficiently large prime $p$ there is a Sidon set 
$$
\mathscr{S}\subset \bigg(\left[-p^2, \left(-\frac{1}{2}-\frac{\varepsilon^2}{8}\right)p^2\right]\bigcup \left[\left(\frac{1}{2}+\frac{\varepsilon^2}{8}\right)p^2, p^2\right]\bigg)
$$ 
such that $\big|\mathscr{S}\big|\ge \left(1-\frac{3\varepsilon}{4}\right)p$.
\end{lemma}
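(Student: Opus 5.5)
The plan is to prove Lemma \ref{lem1} from Proposition \ref{prop2} alone, by choosing a good cyclic shift of the Bose set and then cutting the circle $\mathbb{Z}/m\mathbb{Z}$, $m=p^2-1$, at the point opposite the middle of the forbidden band. The asymptotic formula of Proposition \ref{prop1} should not be needed. Put $\delta=\varepsilon^2/8$.

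\emph{Shifting.} Let $G=\{g_1,\dots,g_p\}$ be the set from Proposition \ref{prop2}. For every integer $t$, the translate $G+t$ reduced modulo $m$ is still a Sidon set modulo $m$, since $(g_i+t)+(g_j+t)=g_i+g_j+2t$. Take representatives in $\{1,\dots,m\}$ and let
$$B=\big[(\tfrac12-\delta)p^2-1,\ (\tfrac12+\delta)p^2\big]$$
be a ``bad band'' of length $2\delta p^2+O(1)$. Averaging over $t\in\mathbb{Z}/m\mathbb{Z}$, each $g_i$ falls into $B$ for exactly $|B\cap\mathbb{Z}|$ values of $t$. So the average number of elements of $G+t$ lying in $B$ is $p\,(2\delta p^2+O(1))/m=2\delta p+O(1/p)$. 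Hence some shift $t$ has at most $2\delta p+1=\frac{\varepsilon^2}{4}p+1$ elements in $B$.

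\emph{Unfolding to integers.} For this $t$, discard the residues in $B$. Keep each residue $r>(\frac12+\delta)p^2$ as the integer $r\in[(\frac12+\delta)p^2,p^2]$. Replace each residue $r<(\frac12-\delta)p^2-1$ by $r-m$; then $-p^2<r-m<-(\frac12+\delta)p^2$. The resulting integer set $\mathscr{S}$ lies in the required union of two intervals, and its elements are distinct, incongruent representatives of elements of $G+t$. If $s_1+s_2=s_3+s_4$ in $\mathbb{Z}$, then the same identity holds modulo $m$. The Sidon property of $G+t$ modulo $m$ then forces $\{s_1,s_2\}=\{s_3,s_4\}$ as residues, hence as integers. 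So $\mathscr{S}$ is a Sidon set.

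\emph{Counting.} We get $|\mathscr{S}|\ge p-\frac{\varepsilon^2}{4}p-1$. For $\varepsilon<1$ we have $\frac{\varepsilon^2}{4}\le\frac{\varepsilon}{4}$, so for $p$ large (say $p\ge 2/\varepsilon$) this is at least $(1-\frac{3\varepsilon}{4})p$. If $\varepsilon\ge1$, the claimed lower bound is at most $p/4$, and the same argument with $\varepsilon$ replaced by any $\varepsilon'<1$ handles it, after shrinking the band accordingly.

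The only delicate point is the boundary bookkeeping. One must check that the shifted representatives land in the closed intervals with the stated constants, including the off-by-one gaps between $m$ and $p^2$ and the endpoint $(\frac12+\delta)p^2$. This is handled by enlarging the bad band by $O(1)$, which costs only $O(1)$ elements in the average.
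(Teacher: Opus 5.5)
Your proof is correct, but it takes a different route from the paper at the key counting step.

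\textbf{How the paper does it.} The paper uses the Bose set as given. It cuts at the fixed points $(\frac12\pm\frac{\varepsilon^2}{8})p^2$ and folds the lower part down by $p^2-1$. It also drops the single element $s_\ell$, which handles the same off-by-one issue that you handle by enlarging the band by $1$. To bound the discarded middle elements, it notes that they form an honest Sidon set in an interval of length $\frac{\varepsilon^2}{4}p^2$. Proposition~\ref{prop1} then allows at most $(1+o(1))\frac{\varepsilon}{2}p\le\frac{5\varepsilon}{8}p$ of them.

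\textbf{How you do it.} You use that the modular Sidon property is invariant under translation, and you average over all $m$ translates. This finds a translate with only about $\frac{\varepsilon^2}{4}p$ elements in the band.

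\textbf{Comparison.} Your route is more elementary: it needs only Proposition~\ref{prop2}, not the Erd\H{o}s--Tur\'an type upper bound behind Proposition~\ref{prop1}. It is also quantitatively stronger, since the loss is quadratic rather than linear in $\varepsilon$. The paper's route avoids the averaging step and works directly with the Bose set. For Theorem~\ref{thm:1} either bound suffices, because only $|\mathscr{S}|\ge(1-O(\varepsilon))p$ is used.

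\textbf{A slip in your remark on $\varepsilon\ge 1$.} The fix you propose goes the wrong way. Replacing $\varepsilon$ by a smaller $\varepsilon'$ enlarges the admissible intervals, so the resulting set need not lie in the intervals the lemma requires. The remark is also unnecessary:
\begin{itemize}
\item For $\varepsilon<2$ (so $\delta<\frac12$), your argument with $\delta=\varepsilon^2/8$ runs verbatim for large $p$.
\item The inequality $p-\frac{\varepsilon^2}{4}p-1\ge(1-\frac{3\varepsilon}{4})p$ is equivalent to $\frac{\varepsilon(3-\varepsilon)}{4}p\ge 1$, which holds for large $p$ whenever $\varepsilon<3$.
\item For $\varepsilon\ge\frac43$ the required size is nonpositive, so the empty set already works.
\end{itemize}
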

\begin{proof}
Let $p$ be a sufficiently large prime. By Proposition \ref{prop2} there are $p$ integers
$$
s_1<\cdots<s_\ell\le \left(\frac{1}{2}-\frac{\varepsilon^2}{8}\right)p^2<s_{\ell+1}<\cdots<s_y<\left(\frac{1}{2}+\frac{\varepsilon^2}{8}\right)p^2\le s_{y+1}<\cdots<s_p
$$
in the interval $\big[1, p^2\big]$ such that 
all sums 
$
s_i+s_j\ (i\le j)
$
are different modulo $p^2-1$. Let
$$
\mathscr{S}=\Big\{s_1-(p^2-1),s_2-(p^2-1),\cdots, s_{\ell-1}-(p^2-1),s_{y+1}, s_{y+2},\cdots, s_p\Big\}.
$$
Then, clearly we have
$$
\mathscr{S}\subset \bigg(\left[-p^2, \left(-\frac{1}{2}-\frac{\varepsilon^2}{8}\right)p^2\right]\bigcup \left[\left(\frac{1}{2}+\frac{\varepsilon^2}{8}\right)p^2, p^2\right]\bigg)
$$ 
Note that $\big\{s_{\ell+1}, s_{\ell+2}, \cdots, s_y\big\}$ is a Sidon set in the interval 
$$
\left[\left(\frac{1}{2}-\frac{\varepsilon^2}{8}\right)p^2, \left(\frac{1}{2}+\frac{\varepsilon^2}{8}\right)p^2\right],
$$ 
which is of length $\frac{\varepsilon^2}{4}p^2$. Thus, by Proposition \ref{prop1} we have
\begin{align}\label{eq-lem-2}
y-\ell\le \big(1+o(1)\big)\sqrt{\frac{\varepsilon^2}{4}p^2}\le \frac{5\varepsilon p}{8}.
\end{align}
Hence, by (\ref{eq-lem-2}) we have
$$
\big|\mathscr{S}\big|=p-(y-\ell)-1\ge p-\frac{5\varepsilon p}{8}-1\ge \left(1-\frac{3\varepsilon}{4}\right)p.
$$

It remains to show that $\mathscr{S}$ is a Sidon set. Let
\begin{align*}
\widetilde{s_i}=
\begin{cases}
s_i-(p^2-1), & \text{if~}1\le i\le \ell-1,\\
s_i, & \text{if~}y+1\le i\le t.
\end{cases}
\end{align*}
Since 
$
s_i+s_j\ (i\le j)
$
are different modulo $p^2-1$, we know that 
$
\widetilde{s_i}+\widetilde{s_j}\ (i\le j)
$
are also different modulo $p^2-1$, which implies that $\mathscr{S}=\big\{\widetilde{s_i}\big\}$ is a Sidon set.
\end{proof}

Now, we turn to the proof of Theorem \ref{thm:1}.
\begin{proof}[Proof of Theorem \ref{thm:1}]
Let $\varepsilon>0$ be arbitrarily small. It suffices to construct a unique representation basis $A$ such that for infinitely many positive integers $x$ we have
\begin{align}\label{eq-0204-4}
A(-x,x)\ge \big(1-\varepsilon\big)\sqrt{x}.
\end{align}
For any integer set $\mathcal{S}$ and integer $s$, define
$$
2\mathcal{S}:=\big\{s_1+s_2: s_1, s_2\in \mathcal{S}\big\}, \quad \mathcal{S}+s:=\{s_1+s: s_1\in \mathcal{S}\}.
$$
For convenience, we follow some notations adopted by \cite{Ding}.
The construction will be done via inductive process. 
We will construct a series of subsets
$$
A_1\subset A_2\subset\cdots \subset A_i\subset\cdots
$$
and a series of positive integers
$$
x_1<x_2<\cdots<x_i<\cdots
$$
such that for any positive integer $h$ we have the following four properties:

I. $r_{A_h}(n)\le 1$ for any $n\in \mathbb{Z}$,

II. $r_{A_{2h}}(n)=1$ for any $n\in \mathbb{Z}$ with $|n|\le h$,

III. $A_{2h-1}(-x_h,x_h)\ge \big(1-\varepsilon\big)\sqrt{x_h},$

IV. $0\not\in A_h$.

\medskip

First, we set $A_1=\{-1,1\}$ and $x_1=1$. It can be easily checked that inductive hypotheses I, III, IV satisfy with $A_1$. Assume that the sets $A_1\subset A_2\subset\cdots \subset A_{2h-1}$ and integers $x_1<x_2<\cdots<x_h$ satisfying I to IV had already been constructed. We are going to construct $A_{2h},\ A_{2h+1}$, and $x_{h+1}$. Suppose that $m$ is the integer with minimum absolute value such that $r_{A_{2h-1}}(m)=0$. Then by $A_{2h-2}\subset A_{2h-1}$ and inductive hypothesis II, we have $|m|\ge h$. 
Let $a^*_{2h-1}$ be the element of $A_{2h-1}$ with maximum absolute value. Set
$
b=4|a^*_{2h-1}|+|m|
$
and
$$
B=A_{2h-1}\cup\big\{-b,b+m\big\}.
$$
Then $0\not\in B$ and $r_B(m)\ge 1$ since $m=-b+(b+m)$. It can be checked that 
$$
2A_{2h-1}, \quad A_{2h-1}+(-b), \quad  A_{2h-1}+(b+m), \quad  \big\{-2b, m, 2b+2m\big\}
$$
are four disjoint sets, which implies that $r_B(n)\le 1$ for any $n\in \mathbb{Z}$. Hence, we have $r_B(m)= 1$. Furthermore, if $r_B(-m)=0$ we let $\widetilde{b}=4b+5|m|$ and
$$
\widetilde{B}=B\cup\big\{-\widetilde{b},\widetilde{b}-m\big\}.
$$
Then, we have $0\not\in \widetilde{B},\ r_{\widetilde{B}}(-m)=1$, and $r_{\widetilde{B}}(n)\le 1$ for any $n\in \mathbb{Z}$ via similar arguments.
Let $A_{2h}=B$ if $r_B(-m)=1$, and let $A_{2h}=\widetilde{B}$ otherwise.
Clearly, by the above constructions we know that $A_{2h}$ satisfies inductive hypotheses I, II and IV. We are left over to construct a set $A_{2h+1}$ and an integer $x_{h+1}>x_h$ satisfying $A_{2h}\subset A_{2h+1}$ and inductive hypotheses I, III, IV. 

Let $a^*_{2h}$ be the element of $A_{2h}$ with maximum absolute value. Let $p$ be a large integer (in terms of $|a^*_{2h}|$, $\varepsilon$, and $x_h$) to be decided later. Then by Lemma \ref{lem1} there is a Sidon set $\mathscr{S}$ such that all of whose elements belong to the interval
$$
\left[-p^2,\left(-\frac{1}{2}-\frac{\varepsilon^2}{8}\right)p^2\right]\bigcup \left[\left(\frac{1}{2}+\frac{\varepsilon^2}{8}\right)p^2, p^2\right]
$$ 
Additionally, we have 
\begin{align}\label{eq-thm-1}
\big|\mathscr{S}\big|\ge \left(1-\frac{3\varepsilon}{4}\right)p.
\end{align}
To the aim of our purpose, we will delete a few elements from $\mathscr{S}$. Let 
$$
A_{2h}-A_{2h}:=\big\{a-a': a, a'\in A_{2h}\big\}.
$$
Then, there are at most $2|A_{2h}|^2\le 2\cdot\big(2a^*_{2h}\big)^2=8\big(a^*_{2h}\big)^2$ elements in the set 
$$
(A_{2h}-A_{2h})\cup 2A_{2h}.
$$
For any nonnegative element $a_1\in A_{2h}-A_{2h}$, there is at most one pair $(s,s')\in \mathscr{S}^2$ with $s_1\neq s_1'$ such that $s_1-s_1'=a_1$ since $\mathscr{S}$ is a Sidon set. Also, for any $a_2\in 2A_{2h}$, there is at most one pair $(s_2,s_2')\in \mathscr{S}^2$ with $s_2\neq s_2'$ such that $s_2+s_2'=a_2$ since $\mathscr{S}$ is a Sidon set.
We remove all these pairs $(s_1,s_1'), (s_2, s_2')$ to form a smaller Sidon set $\widetilde{\mathscr{S}}$. Clearly, by (\ref{eq-thm-1}) and the discussions above we have
$$
\big|\widetilde{\mathscr{S}}\big|\ge \left(1-\frac{3\varepsilon}{4}\right)p-16\big(a^*_{2h}\big)^2\ge \left(1-\varepsilon\right)p,
$$
provided that $p\ge \frac{64(a^*_{2h})^2}{\varepsilon}$. We choose $p$ to be a sufficiently large prime greater than 
\begin{align}\label{eq-thm-2}
\max\left\{\frac{64(a^*_{2h})^2}{\varepsilon}, x_h\right\}
\end{align}
and $x_{h+1}=p^2$. Then, clearly $x_{h}<x_{h+1}$. Furthermore, we take
$$
A_{2h+1}=A_{2h}\cup \widetilde{\mathscr{S}}.
$$
Then, we have $A_{2h}\subset A_{2h+1}$. It can be seen that $0\not\in A_{2h+1}$ by the construction, and
$$
A_{2h+1}(-x_{h+1},x_{h+1})\ge \big|\widetilde{\mathscr{S}}\big|\ge \left(1-\varepsilon\right)p=\left(1-\varepsilon\right)\sqrt{x_{h+1}},
$$
satisfying inductive hypotheses III and IV. It remains to show that $A_{2h+1}$ satisfies inductive hypothesis I. Let $y_1, y_2, z_1$, and $z_2$ be four elements of $A_{2h+1}$ with
\begin{align}\label{eq-thm-3}
y_1+y_2=z_1+z_2.
\end{align}
It suffices to prove that $\{y_1,y_2\}=\{z_1,z_2\}$. We separate the arguments into a few cases.

{\it Case 1.} All of $y_1, y_2, z_1, z_2$ are all elements of $\widetilde{\mathscr{S}}$. In this case, the desired result is trivial since $\widetilde{\mathscr{S}}$ is a Sidon set.

{\it Case 2.} All of $y_1, y_2, z_1, z_2$ are all elements of $A_{2h}$. In this case, the desired result follows from inductive hypothesis I of $A_{2h}$.

{\it Case 3.} Exactly one element of $y_1, y_2, z_1, z_2$ belongs to $\widetilde{\mathscr{S}}$. In this case, we assume, without loss of generality, that $y_1\in \widetilde{\mathscr{S}}$. Then by (\ref{eq-thm-3}) we have
$$
y_1=z_1+z_2-y_2.
$$
Note that 
$|z_1+z_2-y_2|\le 3|a^*_{2h}|$ while 
$$
|y_1|\ge \frac{1}{2}p^2> 8|a^*_{2h}|^4>3|a^*_{2h}|
$$
from (\ref{eq-thm-2}), which is certainly a contradiction.

{\it Case 4.} Exactly two elements of $y_1, y_2, z_1, z_2$ belong to $\widetilde{\mathscr{S}}$. In this case, the desired result is true due to the construction of $\widetilde{\mathscr{S}}$ from $\mathscr{S}$.

{\it Case 5.} Exactly three elements of $y_1, y_2, z_1, z_2$ belong to $\widetilde{\mathscr{S}}$. Without loss of generality, we assume that $y_1\in A_{2h}$ and $y_2, z_1, z_2\in \widetilde{\mathscr{S}}$. Since $\widetilde{\mathscr{S}}\subset \mathscr{S}$, we have
\begin{align}\label{eq-thm-4}
y_2, z_1, z_2\in \left[-p^2,\left(-\frac{1}{2}-\frac{\varepsilon^2}{8}\right)p^2\right]\bigcup \left[\left(\frac{1}{2}+\frac{\varepsilon^2}{8}\right)p^2, p^2\right].
\end{align}
It then follows from (\ref{eq-thm-2}) and (\ref{eq-thm-4}) that
$$
|z_1+z_2-y_2|\ge \frac{\varepsilon^2}{4}p^2\ge 4|a^*_{2h}|^4>|a^*_{2h}|\ge |y_1|,
$$
which is a contradiction.

Inductive hypothesis I of $A_{2h+1}$ is now proved by {\it Case 1} to {\it Case 5}. 
Finally, we let
$$
A=\bigcup_{h=1}^{\infty}A_h.
$$
Then, $r_A(n)=1$ for any $n\in \mathbb{Z}$ by inductive hypotheses I and II, implying that $A$ is a unique representation basis for the integers. Moreover, inductive hypothesis III implies
$$
A(-x_h,x_h)\ge A_{2h-1}(-x_h,x_h)\ge \big(1-\varepsilon\big)\sqrt{x_h}
$$
for any $x_h$, proving our theorem.
\end{proof}

\end{document}